\newcommand{\Z}{\mathbb{Z}}
\newcommand{\R}{\mathbb{R}}
\DeclareMathOperator{\E}{E}
\newcommand{\abs}[1]{\lvert#1\rvert}
\newcommand{\bigabs}[1]{\big\lvert#1\big\rvert}
\newcommand{\biggabs}[1]{\bigg\lvert#1\bigg\rvert}
\newcommand{\Biggabs}[1]{\Bigg\lvert#1\Bigg\rvert}
\newcommand{\e}{\epsilon}
\newtheorem{theorem}{Theorem}[section]
\newtheorem{proposition}[theorem]{Proposition}
\newtheorem{corollary}[theorem]{Corollary}
\newtheorem{lemma}[theorem]{Lemma}
\theoremstyle{definition}
\newtheorem{definition}[theorem]{Definition}
\numberwithin{equation}{section}
\newcommand{\stack}[2]{\genfrac{}{}{0pt}{}{#1}{#2}}
\begin{document}

\title[Random binary sequences]{The peak sidelobe level of random\\binary sequences}

\author{Kai-Uwe Schmidt}

\date{21 February 2011 (revised 21 December 2013)}

\address{Department of Mathematics, 
Simon Fraser University, 8888 University Drive, Burnaby BC V5A 1S6, Canada.}

\curraddr{Faculty of Mathematics, 
Otto-von-Guericke University, Universit\"atsplatz~2, 39106 Magdeburg, Germany.}

\email{kaiuwe.schmidt@ovgu.de}

\thanks{The author was supported by German Research Foundation under Research Fellowship SCHM 2609/1-1.}

\subjclass[2010]{Primary: 05D40; Secondary: 94A55, 60F10}


\begin{abstract}
Let $A_n=(a_0,a_1,\dots,a_{n-1})$ be drawn uniformly at random from $\{-1,+1\}^n$ and define
\[
M(A_n)=\max_{0<u<n}\,\biggabs{\sum_{j=0}^{n-u-1}a_ja_{j+u}}\quad\mbox{for $n>1$}.
\]
It is proved that $M(A_n)/\sqrt{n\log n}$ converges in probability to $\sqrt{2}$. This settles a problem first studied by Moon and Moser in the 1960s and proves in the affirmative a recent conjecture due to Alon, Litsyn, and Shpunt. It is also shown that the expectation of $M(A_n)/\sqrt{n\log n}$ tends to $\sqrt{2}$.
\end{abstract}

\maketitle


\section{Introduction}

Consider a binary sequence $A=(a_0,a_1,\dots,a_{n-1})$ of length $n$, namely an element of $\{-1,+1\}^n$. Define the \emph{aperiodic autocorrelation} at shift $u$ of $A$ to be
\[
C_u(A)=\sum_{j=0}^{n-u-1}a_ja_{j+u}\quad \mbox{for $u\in\{0,1,\dots,n-1\}$}
\] 
and define the  \emph{peak sidelobe level} of $A$ as
\[
M(A)=\max_{0<u<n}\,\abs{C_u(A)}\quad\mbox{for $n>1$}.
\]
Binary sequences with small autocorrelation at nonzero shifts have a wide range of applications in digital communications, including synchronisation and radar (see~\cite{Golomb2005}, for example).
\par
Let $\mu(n)$ be the minimum of $M(A)$ taken over all $2^n$ binary sequences $A$ of length~$n$. By a parity argument, it is seen that $\mu(n)\ge 1$ and it is known that $\mu(n)=1$ for $n\in\{2,3,4,5,7,11,13\}$ (binary sequences attaining the minimum are often called \emph{Barker} sequences). It is a classical problem to decide whether $\mu(n)>1$ for all $n>13$. Although deep methods have been developed~\cite{Turyn1965},~\cite{Schmidt1999}, this problem is still open; the currently smallest undecided case arises for $n>2\cdot 10^{30}$~\cite{Leung2012}. It is conjectured that $\mu(n)$ grows as $n\to\infty$, perhaps like $\sqrt{n}$. We refer to Turyn~\cite{Turyn1968} and Jedwab~\cite{Jedwab2008} for excellent surveys on this problem.
\par
In this paper, we will be concerned with the asymptotic behaviour, as $n\to\infty$, of $M(A)$ for almost all binary sequences $A$ of length $n$. This problem was first studied by Moon and Moser~\cite{Moon1968}. Let $A_n$ be a random binary sequence of length $n$, by which we mean that $A_n$ is drawn uniformly at random from $\{-1,+1\}^n$. In other words, each of the $n$ sequence elements of $A_n$ takes on each of the values $-1$ and $+1$ independently with probability $1/2$. Until now, the best known bounds are
\begin{equation}
\lim_{n\to\infty}\Pr\bigg[1-\e<\frac{M(A_n)}{\sqrt{n\log n}}<\sqrt{2}+\e\bigg]=1\quad\text{for all $\e>0$}.   \label{eqn:lbup_known}
\end{equation}
The upper bound is due to Mercer~\cite{Mercer2006}. In fact, Mercer proved a weaker result but pointed out in a final remark~\cite[p.~670]{Mercer2006} that his proof establishes the above upper bound. The lower bound was proved by Alon, Litsyn, and Shpunt~\cite{Alon2010}, in response to numerical evidence provided by Dmitriev and Jedwab~\cite{Dmitriev2007}. The authors of~\cite{Alon2010} also conjectured that the lower bound can be improved to $\sqrt{2}-\e$. The aim of this paper is to prove this conjecture and therefore to establish the limit distribution, as $n\to\infty$, of $M(A_n)/\sqrt{n\log n}$. In particular, we prove the following.
\begin{theorem}
\label{thm:main}
Let $A_n$ be a random binary sequence of length $n$. Then, as $n\to\infty$,
\[
\frac{M(A_n)}{\sqrt{n\log n}}              \to\sqrt{2}\quad\mbox{in probability}
\]
and
\[
\frac{\E\big[M(A_n)\big]}{\sqrt{n\log n}}   \to\sqrt{2}.
\]
\end{theorem}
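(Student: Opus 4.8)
The upper bound in~\eqref{eqn:lbup_known} already gives $\limsup M(A_n)/\sqrt{n\log n}\le\sqrt2$ in probability, so the crux is to raise the lower constant from $1$ to $\sqrt2$, i.e.\ to prove that for every $\e>0$
\[
P\Big(M(A_n)\ge(\sqrt2-\e)\sqrt{n\log n}\Big)\to1\quad\mbox{as $n\to\infty$}.
\]
My starting point is an exact distributional identity for a single correlation. Fixing $u$ and partitioning $\{0,1,\dots,n-1\}$ into residue classes modulo $u$, the summands of $C_u(A_n)$ become, within each class, consecutive products along a chain $a_r,a_{r+u},a_{r+2u},\dots$ of independent signs. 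Since the consecutive products along such a chain are themselves independent uniform signs (the map sending a chain in $\{-1,+1\}^{\ell}$ to its first entry together with its $\ell-1$ consecutive products is a bijection), and distinct residue classes use disjoint variables, $C_u(A_n)$ has \emph{exactly} the distribution of a sum of $n-u$ independent uniform $\pm1$ variables. In particular it is symmetric with variance $n-u$, and both its sub-Gaussian tail and its sharp moderate-deviation asymptotics are available from the classical theory of Rademacher sums.

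With this identity in hand I would run a second-moment argument over the short shifts. Fix a small $\delta>0$, put $t=(\sqrt2-\e)\sqrt{n\log n}$, and set
\[
X:=\#\big\{\,u:1\le u\le\delta n,\ \abs{C_u(A_n)}>t\,\big\}.
\]
For the first moment, the moderate-deviation estimate gives $P(\abs{C_u(A_n)}>t)\sim\sqrt{2/\pi}\,(t/\sqrt{n-u})^{-1}\exp\!\big(-t^2/2(n-u)\big)$ uniformly for $u\le\delta n$ (the regime $t\asymp\sqrt{n\log n}=o(n^{2/3})$ is exactly where this asymptotic is valid); since $n-u\ge(1-\delta)n$ this has size $n^{-(\sqrt2-\e)^2/2+o(1)}$, and summing over the $\delta n$ admissible shifts yields $\E[X]=n^{\,1-(\sqrt2-\e)^2/2+o(1)}\to\infty$ once $\e,\delta$ are small. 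Thus the expected number of large correlations diverges, which is precisely the regime in which the constant $\sqrt2$ rather than $1$ becomes available.

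The decisive step is the second moment: by Chebyshev it suffices to prove $\E[X^2]=(1+o(1))\E[X]^2$, equivalently that the off-diagonal sum $\sum_{u\ne v}P(\abs{C_u(A_n)}>t,\abs{C_v(A_n)}>t)$ does not exceed $(1+o(1))\E[X]^2$. A direct calculation shows $C_u(A_n)$ and $C_v(A_n)$ are uncorrelated for $u\ne v$ (the four-index products in $\E[C_uC_v]$ cannot pair up unless $u=v$), so if the pair behaved like two independent Gaussians one would obtain exact factorisation. The difficulty is that a crude one-dimensional bound such as $P(C_u>t,C_v>t)\le P(C_u+C_v>2t)$ captures the correct exponential rate $n^{-(\sqrt2-\e)^2}$ but loses a factor of order $\log n$ in the polynomial prefactor, which is fatal here. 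I therefore expect the heart of the proof, and its main obstacle, to be a genuinely two-dimensional local (moderate-deviation) estimate for the joint law of $(C_u(A_n),C_v(A_n))$, showing that its tail factorises into the product of the one-dimensional tails up to a $1+o(1)$ factor, uniformly over all pairs $u\ne v$ with $u,v\le\delta n$. This is an Edgeworth-type statement for the bivariate sign-sum vector with explicit control of the remainder deep into the tail $t\asymp\sqrt{n\log n}$; the subtlety is the pairs for which the combined interaction graph on the indices (with $u$- and $v$-edges) is atypically connected, where the two correlations are most strongly dependent, and these must be shown to be rare enough to contribute negligibly. Granting such an estimate one gets $\mathrm{Var}(X)=o(\E[X]^2)$, hence $P(X\ge1)\to1$, and therefore $\liminf M(A_n)/\sqrt{n\log n}\ge\sqrt2$ in probability; combined with the known upper bound this proves the convergence in probability.

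Finally, the statement about $\E[M(A_n)]$ follows by upgrading the convergence in probability. The lower bound is immediate from $\E[M(A_n)]\ge(\sqrt2-\e)\sqrt{n\log n}\,P\big(M(A_n)\ge(\sqrt2-\e)\sqrt{n\log n}\big)$ together with the probability bound just established. For the matching upper bound I would invoke the sub-Gaussian tail furnished by the distributional identity: since $C_u(A_n)$ is a sum of $n-u\le n$ independent signs, $P(\abs{C_u(A_n)}>s)\le2\exp(-s^2/2n)$, so a union bound gives $P(M(A_n)>s)\le2n\exp(-s^2/2n)$. Writing $\E[M(A_n)]=\int_0^\infty P(M(A_n)>s)\,ds$ and splitting at $s_0=(\sqrt2+\e)\sqrt{n\log n}$, the contribution from $s<s_0$ is at most $s_0$, while the Gaussian tail integral over $s>s_0$ is of strictly smaller order than $\sqrt{n\log n}$ because $s_0^2/2n>(1+o(1))\log n$. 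Hence $\limsup\E[M(A_n)]/\sqrt{n\log n}\le\sqrt2+\e$ for every $\e>0$, and letting $\e\to0$ completes the proof.
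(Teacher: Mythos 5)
Your reduction to the lower bound, your distributional identity for a single $C_u(A_n)$ (this is Mercer's observation, used in the paper's Proposition~\ref{pro:Pr_Cu} together with Cram\'er's moderate-deviation theorem), your first-moment computation, and your argument for the upper bound on $\E[M(A_n)]$ are all sound. But the step you yourself flag as ``the decisive step'' --- a bivariate moderate-deviation estimate showing that $\Pr\big[\abs{C_u(A_n)}>t\,\cap\,\abs{C_v(A_n)}>t\big]$ factorises into the product of the one-dimensional tails up to a factor $1+o(1)$, uniformly over all pairs $u\ne v$ with $u,v\le\delta n$ and with $t\asymp\sqrt{n\log n}$ --- is exactly what is missing, and it is the entire difficulty of the theorem. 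Without it Chebyshev gives nothing: if the joint tails factorise only up to a constant $C>1$, you obtain $\Pr[X=0]\le (C-1)+o(1)$ rather than $o(1)$, so $\Pr[X\ge 1]\to 1$ does not follow. As written, the proposal reduces the theorem to an unproved (and genuinely hard) estimate rather than proving it.

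The paper's route shows that this sharp factorisation is not needed, and the device that makes this possible is one your proposal omits entirely: McDiarmid's bounded-differences inequality (Corollary~\ref{cor:Inequality_MA}), which gives $\Pr\big[\abs{M(A_n)-\E[M(A_n)]}\ge\theta\big]\le 2e^{-\theta^2/2n}$. The paper restricts attention to the shifts $1\le u\le n/\log n$, applies Bonferroni with only a crude joint bound $\Pr\big[\abs{C_u}\ge\lambda_n\cap\abs{C_v}\ge\lambda_n\big]\le 23/n^2$ (Proposition~\ref{pro:Pr_CuCv}, obtained by counting ``even tuples'' to control the $2p$-th moments of $C_uC_v$ with $p\approx\log n$ --- a polynomial-order bound, nowhere near $1+o(1)$ precision), and concludes merely that $\Pr\big[M(A_n)\ge\sqrt{2n\log n}\big]\ge\tfrac{1}{10}(\log n)^{-3/2}$. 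This lower bound tends to zero, so it proves nothing in probability by itself; but fed into the concentration inequality it forces $\E[M(A_n)]\ge\sqrt{2n\log n}-O(\sqrt{n\log\log n})$, which gives the expectation statement, and a second application of McDiarmid converts convergence of the means into convergence in probability. If you wish to salvage your second-moment approach, note that a constant-factor factorisation of the joint tail would give $\Pr[X\ge 1]\ge 1/C$ by Paley--Zygmund, and that a probability bounded below by a constant, combined with McDiarmid, would again suffice; but some form of the concentration step is indispensable, since a pure second-moment argument requires precisely the $1+o(1)$ estimate you have not supplied.
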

\par
Alon, Litsyn, and Shpunt~\cite{Alon2010} already observed that, as a consequence of McDiarmid's inequality (Lemma~\ref{lem:bounded_differences}), $M(A_n)$ is concentrated around its expected value, but could only show that
\begin{equation}
\liminf_{n\to\infty}\,\frac{\E\big[M(A_n)\big]}{\sqrt{n\log n}}\ge 1.   \label{eqn:lb_E1}
\end{equation}
Their proof considers $C_u(A_n)$ only for $u\ge n/2$ and crucially relies on the fact that $C_u(A_n)$ and $C_v(A_n)$ are independent whenever $n/2\le u<v<n$. Our method considers $C_u(A_n)$ also for $u<n/2$. In particular, by a careful estimation of the moments of $C_u(A_n)C_v(A_n)$ for $0<u<v<n$, we will show that the lower bound~\eqref{eqn:lb_E1} can be improved to $\sqrt{2}$, which together with~\eqref{eqn:lbup_known} establishes the second part of Theorem~\ref{thm:main}. The first part of Theorem~\ref{thm:main} then follows from McDiarmid's inequality.
\par
As pointed out in~\cite{Alon2010}, given a binary sequence $A=(a_0,a_1,\dots,a_{n-1})$ of length $n$, the quantity $M(A)$ is related to the more general $r$th-order correlation measure $S_r(A)$, which was defined by Mauduit and S\'ark\"ozy~\cite{Mauduit1997} to be
\[
S_r(A):=\max_{0\le u_1<u_2<\cdots<u_r<n}\;\max_{0\le k\le n-u_r}\Biggabs{\sum_{j=0}^{k-1}a_{j+u_1}a_{j+u_2}\cdots a_{j+u_r}}\quad\mbox{for $n\ge r$}.
\]
Alon, Kohayakawa, Mauduit, Moreira, and R\"odl~\cite{Alon2007} established that, given a random binary sequence $A_n$ of length $n$, then for all $r\ge 2$,
\[
\lim_{n\to\infty}\Pr\Bigg[\frac{2}{5}<\frac{S_r(A_n)}{\sqrt{n\log {n\choose r}}}<\sqrt{3}+\e\Bigg]=1\quad\text{for all $\e>0$}.
\]
Since, for every binary sequence $A$, we have $M(A)\le S_2(A)$, Theorem~\ref{thm:main} implies that for $r=2$ the lower bound can be improved from $2/5$ to $1-\e$.


\section{Preliminary Results}

The main results of this section are the following. Given a random binary sequence $A_n$ of length $n$, Proposition~\ref{pro:Pr_Cu} gives a lower bound for
\begin{equation}
\Pr\big[\abs{C_u(A_n)}\ge \sqrt{2n\log n}\big]   \label{eqn:lb}
\end{equation}
for small $u$. This result can also be concluded from~\cite{Alon2010}. However, the proof presented here is considerably simpler and more direct. Proposition~\ref{pro:Pr_CuCv} gives an upper bound for
\begin{equation}
\Pr\big[\abs{C_u(A_n)}\ge \sqrt{2n\log n}\,\cap\,\abs{C_v(A_n)}\ge \sqrt{2n\log n}\big]   \label{eqn:ub}
\end{equation}
for $0<u<v<n$. These bounds will be the crucial ingredients to prove the main result of this paper.

\subsection{~}

To bound~\eqref{eqn:lb}, we shall need the following refinement of the central limit theorem.
\begin{lemma}[Cram\'er~{\cite[Thm.~2]{Cramer1938}}]
\label{lem:asymptotic_tail}
Let $X_0,X_1,\dots$ be identically distributed mutually independent random variables satisfying $\E[X_0]=0$ and $\E[X_0^2]=1$ and suppose that there exists $T>0$ such that $\E[e^{tX_0}]<\infty$ for all $\abs{t}<T$. Write $Y_k=X_0+X_1+\cdots+X_{k-1}$ and let $\Phi$ be the distribution function of a normal random variable with zero mean and unit variance. If $\theta_k>1$ and $\theta_k/k^{1/6}\to 0$ as $k\to\infty$, then
\[
\frac{\Pr\big[\abs{Y_k}\ge\theta_k\sqrt{k}\big]}{2\Phi(-\theta_k)}\to 1.
\]
\end{lemma}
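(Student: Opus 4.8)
The plan is to reduce the two-sided estimate to a one-sided tail and then to treat the one-sided tail by an exponential change of measure. First I would observe that $2\Phi(-\theta_n)=\Pr[\abs{Z}\ge\theta_n]$ for a standard normal $Z$, and that $\Pr[\abs{Y_n}\ge\theta_n\sqrt n]$ splits as the upper tail $\Pr[Y_n\ge\theta_n\sqrt n]$ plus the lower tail $\Pr[Y_n\le-\theta_n\sqrt n]$. Applying the upper-tail analysis both to $X_0$ and to $-X_0$ (which also has mean $0$, variance $1$, and the required exponential moments) handles the lower tail, so it suffices to prove $\Pr[Y_n\ge\theta_n\sqrt n]/\Phi(-\theta_n)\to 1$. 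I would keep in view the Mills-ratio asymptotic $\Phi(-\theta_n)\sim\varphi(\theta_n)/\theta_n$ valid as $\theta_n\to\infty$, while for bounded $\theta_n$ the denominator stays bounded away from $0$ and a Berry--Esseen bound already settles the claim; the real point is thus an estimate uniform over all admissible sequences $(\theta_n)$.

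The core tool is exponential tilting. Set $\psi(t)=\log\E[e^{tX_0}]$, which by hypothesis is analytic on a neighbourhood of $0$ with $\psi(0)=\psi'(0)=0$ and $\psi''(0)=1$. For each $n$ I would choose the tilt $h=h_n$ so that the tilted mean hits the target, namely $\psi'(h)=\theta_n/\sqrt n$; since $\theta_n/\sqrt n\to 0$ and $\psi'$ is an analytic local diffeomorphism near $0$, this gives $h=\theta_n/\sqrt n+O(\theta_n^2/n)$, so $h\to 0$ and $h$ lies safely inside the domain of analyticity for large $n$. Writing $\E_h$ for expectation under the law with density $e^{hX-\psi(h)}$ relative to the original, and $y=\theta_n\sqrt n$, the change-of-measure identity reads
\[
\Pr\big[Y_n\ge y\big]=e^{n\psi(h)-hy}\,\E_h\big[e^{-h(Y_n-y)}\mathbf 1\{Y_n\ge y\}\big].
\]
Under $\E_h$ the sum $Y_n$ is again a sum of $n$ i.i.d.\ terms, now with mean exactly $y$ and variance $n\psi''(h)=n(1+o(1))$.

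Next I would analyse the two factors. Normalising $W=(Y_n-y)/\sqrt{n\psi''(h)}$, the residual expectation becomes $\E_h[e^{-\beta_n W}\mathbf 1\{W\ge 0\}]$ with $\beta_n=h\sqrt{n\psi''(h)}=\theta_n(1+o(1))$; approximating the tilted, centred, normalised sum $W$ by a standard normal and integrating gives $\int_0^\infty e^{-\beta_n w}\varphi(w)\,dw=e^{\beta_n^2/2}\Phi(-\beta_n)$. For the prefactor I would Taylor-expand $\psi$: using $\psi(h)=\tfrac12 h^2+O(h^3)$ together with the defining relation $hy=hn\psi'(h)$ yields $n\psi(h)-hy=-\tfrac12\theta_n^2+O(nh^3)$, while $\beta_n^2/2=\tfrac12\theta_n^2+O(nh^3)$. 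The two quadratic terms cancel, so the product collapses to $\Phi(-\theta_n)$ up to a factor $\exp(O(nh^3))$ and the error in replacing $\beta_n$ by $\theta_n$. The decisive observation is that $nh^3=\theta_n^3/\sqrt n\,(1+o(1))=(\theta_n n^{-1/6})^3(1+o(1))$, which tends to $0$ precisely under the hypothesis $\theta_n n^{-1/6}\to 0$. Thus the cubic (third-cumulant) correction, which in general generates the Cram\'er series, is exactly extinguished by the stated growth restriction, and the ratio tends to $1$.

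The main obstacle is the local step in which the normalised tilted sum $W$ is replaced by a Gaussian. Because the weight $e^{-\beta_n W}$ concentrates the relevant mass on a window of width $1/\beta_n\approx 1/\theta_n$ about $0$, a crude central limit theorem is insufficient when $\theta_n\to\infty$; one needs a local limit theorem, or an Edgeworth expansion with an explicit remainder, controlled \emph{uniformly} in the tilt parameter $h$. The difficulty is that the cumulants of the tilted variable depend on $h$ (hence on $\theta_n$), so the Berry--Esseen or Edgeworth error must be shown to be $o(1)$ simultaneously for every admissible sequence with $1<\theta_n=o(n^{1/6})$. Making this uniformity rigorous---most cleanly by establishing $\Pr[Y_n\ge x\sqrt n]/\Phi(-x)=\exp(O(x^3/\sqrt n))(1+o(1))$ uniformly for $1\le x\le \e\, n^{1/6}$ and then specialising to $x=\theta_n$---is the technical heart of the argument.
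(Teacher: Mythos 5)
The paper does not prove this lemma at all: it is imported verbatim from Cram\'er's 1938 paper (Theorem~2 there), so there is no internal proof to compare against. Your sketch is, in essence, Cram\'er's own argument (the one reproduced in Feller, vol.~II, XVI.7, and in Petrov's book on sums of independent random variables): reduce to one tail, tilt exponentially so the tilted mean sits at the threshold, and observe that the prefactor $e^{n\psi(h)-hy}$ and the Gaussian integral $e^{\beta_n^2/2}\Phi(-\beta_n)$ cancel up to $\exp(O(nh^3))$, with $nh^3\asymp(\theta_n n^{-1/6})^3\to 0$ being exactly the stated hypothesis. That identification of why the $n^{1/6}$ threshold appears is the right one, and all the expansions you write down check out.

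The one place where your plan is not yet a proof is the step you yourself flag: replacing the normalised tilted sum $W$ by a Gaussian inside $\E_h[e^{-\beta_n W}\mathbf{1}\{W\ge 0\}]$, uniformly in the tilt. Two remarks. First, you do not need a local limit theorem or a density-level Edgeworth expansion --- which is fortunate, because in the application of this lemma the summands are $\pm1$-valued, hence lattice, and a density-based local limit theorem is unavailable. It suffices to integrate by parts, writing $\E_h[e^{-\beta_n W}\mathbf{1}\{W\ge 0\}]=\int_0^\infty \beta_n e^{-\beta_n w}\,\Pr_h[0\le W\le w]\,dw$, and to apply the Berry--Esseen theorem to the tilted summands; the Berry--Esseen constant is uniform in $h$ for $h$ in a fixed neighbourhood of $0$ because the tilted variance and third absolute moment are continuous in $h$ and nondegenerate at $h=0$. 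This yields an additive error $O(n^{-1/2})$ against a main term of order $1/\beta_n\asymp 1/\theta_n$ (when $\theta_n\to\infty$; when $\theta_n$ stays bounded the main term is bounded below and the same estimate is trivial), so the relative error is $O(\theta_n/\sqrt n)=o(1)$ under your hypotheses. Second, with that substitution made explicit, your outline closes into a complete and correct proof; as it stands it is an accurate roadmap with the standard technical step deferred rather than carried out.
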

\par
\begin{proposition}
\label{pro:Pr_Cu}
Let $A_n$ be a random binary sequence of length $n>2$ and let $u$ be an integer satisfying $1\le u\le\frac{n}{\log n}$. Then
\[
\Pr\big[\abs{C_u(A_n)}\ge \sqrt{2n\log n}\big]\ge\frac{1}{5n\sqrt{\log n}}
\]
for all sufficiently large $n$.
\end{proposition}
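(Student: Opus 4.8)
The plan is to exhibit $C_u(A_n)$ as a sum of \emph{independent} Rademacher variables and then apply Cram\'er's tail estimate (Lemma~\ref{lem:asymptotic_tail}), tracking the constants closely enough to beat $1/5$. The one genuine idea is the independence; after that the work is bookkeeping.

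First I would establish the key structural fact: the $n-u$ products $b_j:=a_ja_{j+u}$, for $0\le j\le n-u-1$, are mutually independent, each uniform on $\{-1,+1\}$. Writing $a_j=(-1)^{x_j}$ with $x_0,\dots,x_{n-1}$ independent uniform bits, we have $b_j=(-1)^{x_j+x_{j+u}}$, so the $b_j$ are mutually independent and uniform precisely when the linear forms $x_j+x_{j+u}$ are linearly independent over $\mathbb{F}_2$. These correspond to the edge vectors $e_j+e_{j+u}$ of the graph on $\{0,\dots,n-1\}$ with edges $\{j,j+u\}$; grouping the indices by residue class modulo $u$ shows this graph is a disjoint union of paths, hence a forest, and the edge vectors of a forest are linearly independent over $\mathbb{F}_2$. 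Consequently $C_u(A_n)=\sum_{j=0}^{n-u-1}b_j$ is a sum of $N:=n-u$ independent, mean-zero, unit-variance, bounded (hence all exponential moments finite) random variables, exactly the setting of Lemma~\ref{lem:asymptotic_tail}.

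Next I would apply that lemma with $Y_N=C_u(A_n)$ and $\theta_N:=\sqrt{2n\log n/(n-u)}$, chosen so that $\theta_N\sqrt N=\sqrt{2n\log n}$. Since $1\le u\le n/\log n$ forces $N=n-u\sim n$ and $\theta_N\sim\sqrt{2\log n}$, the hypotheses $\theta_N>1$ and $\theta_N N^{-1/6}\to 0$ hold for large $n$, giving $\Pr[\abs{C_u(A_n)}\ge\sqrt{2n\log n}]\sim 2\Phi(-\theta_N)$. It then remains to bound $2\Phi(-\theta_N)$ from below. Using $\Phi(-x)\ge\frac{1}{\sqrt{2\pi}}\,\frac{x}{1+x^2}\,e^{-x^2/2}$ together with the constraint $u\le n/\log n$, which yields
\[
\frac{\theta_N^2}{2}=\frac{n\log n}{n-u}\le\frac{(\log n)^2}{\log n-1}=\log n+1+o(1),
\]
I would obtain $e^{-\theta_N^2/2}\ge e^{-1}n^{-1}(1-o(1))$ and $\theta_N/(1+\theta_N^2)\sim(2\log n)^{-1/2}$, so that
\[
2\Phi(-\theta_N)\ge\frac{1}{\sqrt{\pi}\,e}\cdot\frac{1}{n\sqrt{\log n}}\,(1-o(1)).
\]

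The whole argument then hinges on the numerical fact that $1/(\sqrt{\pi}\,e)\approx 0.2075$ strictly exceeds $1/5$: combined with the Cram\'er asymptotic this forces the claimed bound once $n$ is large. I expect the obstacle to be care with two points rather than any conceptual difficulty. The constraint $u\le n/\log n$ is essential and essentially sharp for the method, since it is exactly what keeps $n/(n-u)=1+O(1/\log n)$, so that the Gaussian factor $e^{-\theta_N^2/2}$ loses only the bounded amount $e^{-1}$ relative to $1/n$; a substantially larger range of $u$ would drive the constant below $1/5$. Secondly, because the bound must hold uniformly over $u$, I would either invoke the uniform form of Cram\'er's expansion (legitimate here because the summand distribution is fixed and $\theta_N N^{-1/6}\to 0$ uniformly over the admissible $u$) or, to use only Lemma~\ref{lem:asymptotic_tail} as stated, argue by contradiction along a hypothetical subsequence $(n_k,u_k)$ violating the bound and apply the lemma to that sequence.
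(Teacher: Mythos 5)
Your proposal is correct and follows essentially the same route as the paper: independence of the products $a_ja_{j+u}$ (which the paper simply cites from Mercer), Cram\'er's theorem with the same choice $\theta_N=\sqrt{2n\log n/(n-u)}$, and a standard Gaussian tail lower bound yielding the constant $1/(e\sqrt{\pi})>1/5$. Your remark about uniformity in $u$ is a legitimate fine point that the paper passes over silently, and your subsequence argument handles it adequately.
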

\begin{proof}
Write $A_n=(a_0,a_1,\dots,a_{n-1})$. It is well known that the $n-u$ products
\[
a_0a_u,\,a_1a_{1+u},\dots,a_{n-u-1}a_{n-1}
\]
are mutually independent. A proof of this fact was given by Mercer~\cite[Prop.~1.1]{Mercer2006}. Hence $C_u(A_n)$ is a sum of $n-u$ mutually independent random variables, each taking each of the values $-1$ and $+1$ with probability $1/2$. Notice that $\E[e^{ta_0a_u}]=\cosh(t)$ and, setting
\[
\xi_n=\sqrt{\frac{2n\log n}{n-u}},
\]
we find that $\xi_n/(n-u)^{1/6}\to 0$ since $u\le \frac{n}{\log n}$. We can therefore apply Lemma~\ref{lem:asymptotic_tail} to conclude, as $n\to\infty$,
\begin{equation}
\Pr\big[\abs{C_u(A_n)}\ge\sqrt{2n\log n}\big]\sim 2\Phi(-\xi_n),   \label{eqn:Pr_Cu_F}
\end{equation}
where
$\Phi$ is the distribution function of a standard normal random variable. It is well known (see~\cite[Thm.~1.2.3]{Durrett2010}, for example) that
\[
\frac{1}{\sqrt{2\pi}\,z}\Big(1-\frac{1}{z^2}\Big)\,e^{-z^2/2}\le\Phi(-z)\le\frac{1}{\sqrt{2\pi}\,z}\,e^{-z^2/2}\quad\mbox{for $z>0$},
\]
so that, since $\frac{n}{n-u}\sim 1$, as $n\to\infty$,
\[
2\Phi(-\xi_n)\sim \frac{1}{\sqrt{\pi \log n}}\,e^{-\frac{n}{n-u}\log n}.
\]
Using $u\le \frac{n}{\log n}$, we conclude
\[
e^{-\frac{n}{n-u}\log n}\ge e^{-\frac{\log n}{\log n-1}\,\log n}\sim \frac{1}{en}
\]
as $n\to\infty$. It then follows from~\eqref{eqn:Pr_Cu_F} that for all $\alpha>e\sqrt{\pi}$ and all sufficiently large $n$ we have
\[
\Pr\big[\abs{C_u(A_n)}\ge\sqrt{2n\log n}\big]\ge\frac{1}{\alpha n\sqrt{\log n}}.
\]
The lemma follows since $5>e\sqrt{\pi}$.
\end{proof}

\subsection{~}

We now turn to the derivation of an upper bound for~\eqref{eqn:ub}. It will be convenient to define the notion of an even tuple as follows.
\begin{definition}
A tuple $(x_1,x_2,\dots,x_{2m})$ is \emph{even} if there exists a permutation $\sigma$ of $\{1,2,\dots,2m\}$ such that $x_{\sigma(2i-1)}=x_{\sigma(2i)}$ for each $i\in\{1,2,\dots,m\}$.
\end{definition}
\par
For example, $(1,3,1,4,3,4)$ is even, while $(2,1,1,2,1,3)$ is not even. In the next two lemmas we will prove two results about even tuples, which we then use to estimate moments of $C_u(A_n)C_v(A_n)$.
\par
Recall that, for positive integer $k$, the double factorial
\[
(2k-1)!!=\frac{(2k)!}{k!\,2^k}=(2k-1)(2k-3)\cdots 3\cdot 1
\]
is the number of ways to arrange $2k$ objects into $k$ unordered pairs.
\begin{lemma}
\label{lem:even_tuple_single}
Let $m$ and $q$ be positive integers and let $R$ be the set of even tuples in
\[
\big\{(x_1,x_2,\dots,x_{2q})\,:\,x_i\in\Z,\,0\le x_i<m\big\}.
\]
Then 
\[
\abs{R}\le(2q-1)!!\,m^q.
\]
\end{lemma}
\begin{proof}
There are $(2q-1)!!$ ways to arrange $x_1,x_2,\dots,x_{2q}$ into $q$ unordered pairs and to each of these $q$ pairs we assign a value of $\{0,1,\dots,m-1\}$. In this way we construct all elements of $R$ at least once, which proves the lemma.
\end{proof}
\par
\begin{lemma}
\label{lem:even_tuple_double}
Let $u$, $v$, and $n$ be integers satisfying $0<u,v<n$ and $u\ne v$. Write $I=\{1,2,\dots,2q\}$ and let $t$ be an integer satisfying $0\le t<q$. Let $S$ be the subset of 
\[
\big\{(x_i,x_i+u,y_i,y_i+v)_{i\in I}\,:\,x_i,y_i\in\Z,\,0\le x_i<n-u,\,0\le y_i<n-v\big\}
\]
containing all even elements $(x_i,x_i+u,y_i,y_i+v)_{i\in I}$ such that $(x_i)_{i\in J}$ is not even for all $(2q-2t)$-element subsets $J$ of $I$. Then
\[
\abs{S}\le(8q-1)!!\,n^{2q-(t+1)/3}.
\]
\end{lemma}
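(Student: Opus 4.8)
The plan is to pass from even tuples to pairings of their $8q$ entries and to bound, for each pairing, the number of compatible tuples by a degrees‑of‑freedom count. Each element of $S$ is an even $8q$-tuple, so its entries split into $4q$ unordered pairs of equal values, and there are $(8q-1)!!$ ways to partition $8q$ objects into such pairs. Fixing one partition $M$, each pair forces a relation $\alpha-\beta=$ (integer), where $\alpha,\beta$ range over the $4q$ quantities $x_1,\dots,x_{2q},y_1,\dots,y_{2q}$ and the constant lies in $\{0,\pm u,\pm v,\pm(u-v)\}$. I would then form the graph $H_M$ on these $4q$ variables with one edge per pair: a compatible tuple is determined by one value in $\{0,\dots,n-1\}$ per connected component, so the number of compatible tuples is at most $n^{c(M)}$, where $c(M)$ is the number of components. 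Since $u\ne v$ (and $0<u,v<n$) rules out $x_i=x_i+u$, $y_i=y_i+v$, as well as pairing $x_i,x_i+u$ entirely against $y_j,y_j+v$, the graph $H_M$ has no loops and minimum degree at least $1$; hence every component has at least two vertices and $c(M)\le 2q$. It then suffices to prove $c(M)\le 2q-(t+1)/3$ for every $M$ that can produce a tuple meeting the hypothesis, and to sum over the $(8q-1)!!$ partitions.

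First I would restate the hypothesis usably. A sub‑multiset of $\{x_1,\dots,x_{2q}\}$ in which every value occurs an even number of times has size at most $2q-o$, where $o$ is the number of values occurring an odd number of times. Thus the condition that $(x_i)_{i\in J}$ is not even for every $(2q-2t)$-element subset $J$ is equivalent to $2q-2t>2q-o$, that is, to $o\ge 2t+2$.

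The heart of the matter is to convert $o\ge 2t+2$ into a deficiency of $c(M)$. Working with the partition that records the actual coincidences of a tuple, so that distinct components carry generically distinct values, the odd values of $\{x_i\}$ are internal to the components, and a component contributes an odd value only when some of its $x$-entries are paired with $y$-entries inside it. Such a component contains at least one $x$-variable and one $y$-variable; were there exactly one of each, its four entries $x_i,x_i+u,y_j,y_j+v$ would have to be paired across the two variables, forcing $u=v$, which is excluded. Hence every odd‑producing component has at least three vertices. Writing the deficiency as $2q-c(M)=\sum_{C}\big(\tfrac{\abs{C}}{2}-1\big)$ and discarding the nonnegative contributions of the remaining components, each component $C$ with $\abs{C}\ge 3$ satisfies $\tfrac{\abs{C}}{2}-1\ge\tfrac{\abs{C}}{6}$, so the deficiency is at least $\tfrac16$ of the total number of vertices lying in odd‑producing components. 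Since each odd value is hosted by a distinct $x$-variable there, that vertex count is at least $o$, giving $2q-c(M)\ge o/6\ge(t+1)/3$, as required.

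The step I expect to be most delicate is the bookkeeping that makes precise the phrase ``the odd‑value count is read off from the partition'': one must attach to each tuple a single pairing that reflects its true equalities, verify that the number of tuples attached to a given pairing is still at most $n^{c(M)}$, and confirm that no tuple is omitted. Once this framework is set up, the combinatorial core—the minimal size three of an odd‑producing component, which is precisely where the hypothesis $u\ne v$ enters—and the averaging that produces the constant $1/3$ are short.
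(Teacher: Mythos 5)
Your argument is correct and follows essentially the same route as the paper: enumerate the $(8q-1)!!$ pairings of the $8q$ entries, bound the tuples compatible with a fixed pairing by $n^{c}$ where $c$ is the number of connected components of the induced graph on the $4q$ base variables, and use $u\ne v$ to force every component containing both an $x$- and a $y$-variable to have at least three vertices. The only difference is bookkeeping: the paper caps the number of pairs of the form $(x_i,x_j)$ at $q-t-1$ directly from the hypothesis, whereas you reformulate the hypothesis as ``at least $2t+2$ values of odd multiplicity'' and place at least that many $x$-vertices in the large components (your implicit claim that an $x$-only component yields an even sub-tuple of $(x_i)$ is true but deserves the one-line parity-chain argument) --- the two counts give the identical bound $2q-(t+1)/3$.
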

\begin{proof}
We will construct a set of tuples that contains $S$ as a subset. Arrange the $8q$ variables
\begin{equation}
x_1,x_1+u,\dots,x_{2q},x_{2q}+u,y_1,y_1+v,\dots,y_{2q},y_{2q}+v   \label{eqn:xy}
\end{equation}
into $4q$ unordered pairs $(a_1,b_1),(a_2,b_2),\dots,(a_{4q},b_{4q})$ such that there are at most $q-t-1$ pairs $(x_i,x_j)$. This can be done in at most $(8q-1)!!$ ways. We formally set $a_i=b_i$ for all $i\in\{1,2,\dots,4q\}$. If this assignment does not yield a contradiction, then we call the arrangement of~\eqref{eqn:xy} into $4q$ pairs \emph{consistent}. For example, if there are pairs of the form $(x_i,y_j)$ and $(x_i+u,y_j+v)$, then the arrangement is not consistent since $u\ne v$ by assumption.
\par
Now, for every consistent arrangement, pairs of the form $(x_i,x_j)$ or $(y_i,y_j)$ determine the value of another pair (namely, $(x_i+u,x_j+u)$ or $(y_i+v,y_j+v)$, respectively). On the other hand, for every consistent arrangement, pairs not of the form
\[
(x_i,x_j),\;(y_i,y_j),\;(x_i+u,x_j+u),\;\mbox{or}\; (y_i+v,y_j+v)
\]
determine the value of at least two other pairs. For example, if there exists the pair $(x_i,y_j)$, then $x_i+u$ and $y_j+v$ must lie in different pairs. Therefore, since there are at most $q-t-1$ pairs of the form $(x_i,x_j)$ and at most $q$ pairs of the form $(y_i,y_j)$, for each consistent arrangement, at most 
\[
\tfrac{1}{2}(4q-2t-2)+\tfrac{1}{3}(2t+2)=2q-\tfrac{1}{3}(t+1)
\]
of the variables $x_1,\dots,x_{2q},y_1,\dots,y_{2q}$ can be chosen independently. We assign to each of these a value of $\{0,1,\dots,n-1\}$. In this way, we construct a set of at most $(8q-1)!!\,n^{2q-(t+1)/3}$ tuples that contains $S$ as a subset, as required.
\end{proof}
\par
We now use Lemmas~\ref{lem:even_tuple_single} and~\ref{lem:even_tuple_double} to bound moments of $C_u(A_n)C_v(A_n)$.
\begin{lemma}
\label{lem:moments}
Let $p$ and $h$ be integers satisfying $0\le h<p$ and let $A_n$ be a random binary sequence of length $n$. Then, for $0<u<v<n$,
\[
\E\Big[\big(C_u(A_n)C_v(A_n)\big)^{2p}\Big]\le n^{2p}\big[(2p-1)!!\big]^2\bigg(1+\frac{(8p)^{8h}}{n^{1/3}}+\frac{(8p)^{4p}}{n^{(h+1)/3}}\bigg).
\]
\end{lemma}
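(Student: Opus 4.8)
The plan is to expand the $2p$th power, reduce the moment to a counting problem for even tuples, and then apply Lemmas~\ref{lem:even_tuple_single} and~\ref{lem:even_tuple_double}. Writing $A_n=(a_0,\dots,a_{n-1})$ and $I=\{1,2,\dots,2p\}$, I would multiply out
\[
\big(C_u(A_n)C_v(A_n)\big)^{2p}=\sum\;\prod_{i\in I}a_{x_i}a_{x_i+u}a_{y_i}a_{y_i+v},
\]
where the sum is over all $x_i\in\{0,\dots,n-u-1\}$ and $y_i\in\{0,\dots,n-v-1\}$ for $i\in I$. Since the $a_j$ are independent, have mean zero, and satisfy $a_j^2=1$, the expectation of a single product is $1$ if every index occurs an even number of times and $0$ otherwise; that is, it equals $1$ precisely when the tuple $(x_i,x_i+u,y_i,y_i+v)_{i\in I}$ is even. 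Hence $\E\big[(C_u(A_n)C_v(A_n))^{2p}\big]$ equals the number of even tuples of this form, which I would estimate using Lemmas~\ref{lem:even_tuple_single} and~\ref{lem:even_tuple_double} with $q=p$.

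To extract the leading term, I would split these even tuples according to whether $(x_i)_{i\in I}$ is even. If $(x_i)_{i\in I}$ is even then $(x_i,x_i+u)_{i\in I}$ is even on its own, so the full tuple is even if and only if $(y_i,y_i+v)_{i\in I}$ is even; since these two requirements involve disjoint variables, the number of such tuples factorises. The number of even $(x_i)_{i\in I}$ is at most $(2p-1)!!\,n^p$ by Lemma~\ref{lem:even_tuple_single}, and the number of $(y_i)_{i\in I}$ for which $(y_i,y_i+v)_{i\in I}$ is even equals $\E\big[C_v(A_n)^{2p}\big]$. As in the proof of Proposition~\ref{pro:Pr_Cu}, $C_v(A_n)$ is a sum of $n-v$ mutually independent $\pm1$ variables, so $\E\big[C_v(A_n)^{2p}\big]\le(2p-1)!!\,n^p$, again by Lemma~\ref{lem:even_tuple_single}. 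The product of these bounds is $n^{2p}[(2p-1)!!]^2$, which accounts for the leading factor together with the summand $1$.

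It remains to bound the number $N$ of even tuples for which $(x_i)_{i\in I}$ is not even. In the notation of Lemma~\ref{lem:even_tuple_double} these tuples form the set $S_0$, and using the nested sets $S_0\supseteq S_1\supseteq\cdots\supseteq S_h$ (graded by the size of the largest even subtuple of $(x_i)_{i\in I}$) I would write $N=\abs{S_0\setminus S_h}+\abs{S_h}$. For the deep part, Lemma~\ref{lem:even_tuple_double} with $t=h$ gives $\abs{S_h}\le(8p-1)!!\,n^{2p-(h+1)/3}$; combined with the elementary inequality $(8p-1)!!\le[(2p-1)!!]^2(8p)^{3p}$ this yields the last summand. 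For the shallow part, a tuple in $S_0\setminus S_h$ has at most $2h$ coordinates of $(x_i)_{i\in I}$ lying outside a maximal even subtuple, so I would count it by choosing these at most $2h$ defect coordinates (at most $p^{2h}$ choices, up to a constant), pairing the even bulk of the $x$'s and of the $y$'s through Lemma~\ref{lem:even_tuple_single} to produce the factor $[(2p-1)!!]^2$, and arranging the at most $8h$ remaining active entries by the consistency argument from the proof of Lemma~\ref{lem:even_tuple_double}. Because at least one genuine cross-pairing is forced, that argument both lowers the exponent from $2p$ to $2p-1/3$ and bounds the number of arrangements of the active entries by $(8h)^{4h}$, producing the middle summand.

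The main obstacle is the shallow term $S_0\setminus S_h$. Applying Lemma~\ref{lem:even_tuple_double} with $t=0$ to it directly would replace the target coefficient $[(2p-1)!!]^2 p^{2h}(8h)^{4h}$ by the much larger $(8p-1)!!$, which is useless when $h$ is small; the whole difficulty is to exploit that only a bounded number of coordinates deviate from evenness, so that the bulk $x$- and $y$-pairings can be counted separately while only the at most $8h$ active entries feel the cross-pairing. Once the binomial and double-factorial factors are tracked carefully and the inequality $(8p-1)!!\le[(2p-1)!!]^2(8p)^{3p}$ is checked, the three contributions combine to give the stated bound.
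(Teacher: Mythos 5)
Your overall strategy coincides with the paper's: reduce the moment to a count of even tuples, extract the leading term $n^{2p}[(2p-1)!!]^2$ from the tuples whose $x$-part is even, handle the very non-even tuples with Lemma~\ref{lem:even_tuple_double} at $t=h$, and count the mildly non-even tuples by separating a paired ``bulk'' from at most $2h$ defect coordinates. The leading-term computation and the deep case $S_h$ are fine. The gap is in the shallow case $S_0\setminus S_h$. You grade the tuples only by the structure of $(x_i)_{i\in I}$, so $S_0\setminus S_h$ contains every even tuple in which $(x_i)_{i\in I}$ is not even but admits a $(2p-2h)$-element even subtuple, with \emph{no condition on} $(y_i)_{i\in I}$. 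Your proposed count of this set, however, explicitly uses ``the even bulk of the $y$'s'' and the claim that only ``at most $8h$ remaining active entries'' need to be arranged. Both presuppose that $(y_i)_{i\in I}$ also has a $(2p-2h)$-element even subtuple, and nothing in the definition of $S_0\setminus S_h$ guarantees this: the $y$-part can be far from even (for instance, built from long additive chains $y_{i+1}=y_i+v$ whose dangling endpoints are absorbed by $x$-entries, or from coincidences between $y$-values and bulk $x$-values), in which case far more than $8h$ of the $4p$ entries $y_i,y_i+v$ participate in cross-pairings. For such tuples the factor $(8h)^{4h}$ does not bound the number of arrangements, and the loss of only $n^{-1/3}$ in the exponent is not justified by your argument. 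Establishing that such tuples do not occur, or are few, would itself require a proof you do not give.

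The paper avoids this by making the deep case symmetric: its set $T_3$ consists of all tuples for which \emph{either} $(x_i)_{i\in I}$ \emph{or} $(y_i)_{i\in I}$ fails to have a $(2p-2h)$-element even subtuple (Lemma~\ref{lem:even_tuple_double} applies to the $y$-side after interchanging $u$ and $v$), and its shallow set $T_2$ requires \emph{both} coordinate tuples to have large even subtuples --- which is exactly the hypothesis your bulk-plus-defect count needs. Your argument is repaired by moving the tuples whose $y$-part lacks a $(2p-2h)$-element even subtuple out of $S_0\setminus S_h$ and into the deep case, where the $t=h$ bound absorbs them into the third summand. With that modification, and with the bookkeeping $\binom{2p}{2h}(2p-2h-1)!!\le(2p-1)!!\,p^{h}$ and $(8h-1)!!\le(8h)^{4h}$ that you have essentially right, your outline reproduces the paper's proof.
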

\begin{proof}
Write $I=\{1,2,\dots,2p\}$ and let $T$ be the set containing all even tuples of
\[
\big\{(x_i,x_i+u,y_i,y_i+v)_{i\in I}\,:\,x_i,y_i\in\Z,\,0\le x_i<n-u,\,0\le y_i<n-v\big\}.
\]
Writing $A_n=(a_0,a_1,\dots,a_{n-1})$, we have
\begin{align}
\lefteqn{\E\Big[\big(C_u(A_n)C_v(A_n)\big)^{2p}\Big]}   \nonumber\\
&=\E\Bigg[\bigg(\sum_{i=0}^{n-u-1}a_ia_{i+u}\bigg)^{2p}\bigg(\sum_{j=0}^{n-v-1}a_ja_{j+v}\bigg)^{2p}\Bigg]   \nonumber\\
&=\sum_{i_1,\dots,i_{2p}=0}^{n-u-1}\;\sum_{j_1,\dots,j_{2p}=0}^{n-v-1}\E\big[a_{i_1}a_{i_1+u}\cdots a_{i_{2p}}a_{i_{2p}+u}a_{j_1}a_{j_1+v}\cdots a_{j_{2p}}a_{j_{2p}+v}\big]   \nonumber\\[1ex]
&=\abs{T}   \label{eqn:moment_T}
\end{align}
since $a_0,a_1,\dots,a_{n-1}$ are mutually independent, $\E[a_j]=0$, and $a_j^2=1$ for all $j\in\{0,1,\dots,n-1\}$. %
We define the following subsets of $T$.
\begin{enumerate}
\item $T_1$ contains all elements $(x_i,x_i+u,y_i,y_i+v)_{i\in I}$ of $T$ such that $(x_i)_{i\in I}$ and $(y_i)_{i\in I}$ are even.
\item $T_2$ contains all elements $(x_i,x_i+u,y_i,y_i+v)_{i\in I}$ of $T$ such that $(x_i)_{i\in I}$ or $(y_i)_{i\in I}$ is not even and $(x_i)_{i\in J}$ and $(y_i)_{i\in K}$ are even for some $(2p-2h)$-element subsets $J$ and $K$ of $I$.
\item $T_3$ contains all elements $(x_i,x_i+u,y_i,y_i+v)_{i\in I}$ of $T$ such that either $(x_i)_{i\in J}$ is not even for all $(2p-2h)$-element subsets $J$ of $I$ or $(y_i)_{i\in K}$ is not even for all $(2p-2h)$-element subsets $K$ of $I$.
\end{enumerate}
It is immediate that $T_1$, $T_2$, and $T_3$ partition $T$, so that
\begin{equation}
\abs{T}=\abs{T_1}+\abs{T_2}+\abs{T_3}.   \label{eqn:T1T2T3}
\end{equation}
We now bound the cardinalities of $T_1$, $T_2$, and $T_3$.
\par
{\itshape The set $T_1$.} Using Lemma~\ref{lem:even_tuple_single}, we have the crude estimate
\begin{equation}
\abs{T_1}\le \big[(2p-1)!!\big]^2\,n^{2p}.   \label{eqn:T1}
\end{equation}
\par
{\itshape The set $T_2$.} Let $(x_i,x_i+u,y_i,y_i+v)_{i\in I}$ be an element of $T_2$. Then there exist $(2p-2h)$-element subsets $J$ and $K$ of $I$ such that $(x_i)_{i\in J}$ and $(y_i)_{i\in K}$ are even and
\begin{equation}
(x_i)_{i\in I\setminus J}\quad\mbox{or}\quad(y_i)_{i\in I\setminus K}   \label{eqn:tuple_2h_lr}
\end{equation}
is not even. Since $(x_i)_{i\in J}$ and $(y_i)_{i\in K}$ are even, $(x_i,x_i+u,y_j,y_j+v)_{i\in J,\,j\in K}$ is even. Since $(x_i,x_i+u,y_i,y_i+v)_{i\in I}$ is also even, it follows that
\begin{equation}
(x_i,x_i+u,y_j,y_j+v)_{i\in I\setminus J,\,j\in I\setminus K}   \label{eqn:tuple_2h}
\end{equation}
is even as well. There are ${2p\choose 2h}$ subsets $J$ and ${2p\choose 2h}$ subsets $K$. By Lemma~\ref{lem:even_tuple_single}, for each such $J$ and $K$, there are at most $(2p-2h-1)!!\,n^{p-h}$ even tuples $(x_i)_{i\in J}$ satisfying $0\le x_i<n$ for each $i\in J$ and at most $(2p-2h-1)!!\,n^{p-h}$ even tuples $(y_i)_{i\in K}$ satisfying $0\le y_i<n$ for each $i\in K$. By Lemma~\ref{lem:even_tuple_double} applied with $t=0$ and by interchanging $u$ and $v$ and $(x_i)_{i\in I\setminus J}$ and $(y_i)_{i\in I\setminus K}$ if necessary, the number of even tuples in $\{0,1,\dots,n-1\}^{8h}$ of the form~\eqref{eqn:tuple_2h} such that one of the tuples in~\eqref{eqn:tuple_2h_lr} is not even is at most $(8h-1)!!\,n^{2h-1/3}$. Therefore,
\begin{align}
\abs{T_2}&\le 2n^{2h-1/3}\,(8h-1)!!\bigg[{2p\choose 2h}(2p-2h-1)!!\,n^{p-h}\bigg]^2   \nonumber\\[1ex]
&\le n^{2p-1/3}\big[(2p-1)!!\big]^2\,(8p)^{8h},   \label{eqn:T2}
\end{align}
using very crude bounds.
\par
{\itshape The set $T_3$.} By Lemma~\ref{lem:even_tuple_double} applied with $t=h$ and by interchanging $u$ and $v$ and $(x_i)_{i\in I}$ and $(y_i)_{i\in I}$ if necessary,
\begin{align}
\abs{T_3}&\le2 n^{2p-(h+1)/3}\,(8p-1)!!   \nonumber\\
&\le n^{2p-(h+1)/3}\,(8p)^{4p}.   \label{eqn:T3}
\end{align}
\par
Now the lemma follows by combining~\eqref{eqn:moment_T},~\eqref{eqn:T1T2T3},~\eqref{eqn:T1},~\eqref{eqn:T2}, and~\eqref{eqn:T3}.
\end{proof}
\par
Lemma~\ref{lem:moments} is now used to prove the desired upper bound for~\eqref{eqn:ub}.
\begin{proposition}
\label{pro:Pr_CuCv}
Let $A_n$ be a random binary sequence of length $n$ and write $\lambda_n=\sqrt{2n\log n}$. Then, for $0<u<v<n$ and all sufficiently large $n$,
\[
\Pr\big[\abs{C_u(A_n)}\ge \lambda_n\,\cap\,\abs{C_v(A_n)}\ge \lambda_n\big]\le \frac{23}{n^2}.
\]
\end{proposition}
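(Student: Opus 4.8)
The plan is to reduce the joint tail bound to a single high moment of the product $C_u(A_n)C_v(A_n)$ and then apply Markov's inequality, feeding in the moment estimate of Lemma~\ref{lem:moments}. First I would note that on the event $\{\abs{C_u(A_n)}\ge\lambda_n\}\cap\{\abs{C_v(A_n)}\ge\lambda_n\}$ we have $\abs{C_u(A_n)}\,\abs{C_v(A_n)}\ge\lambda_n^2$, so this event is contained in $\{(C_u(A_n)C_v(A_n))^{2p}\ge\lambda_n^{4p}\}$ for every positive integer $p$. Since $\lambda_n^{4p}=(2n\log n)^{2p}$, Markov's inequality bounds the probability in question by $\E[(C_u(A_n)C_v(A_n))^{2p}]/(2n\log n)^{2p}$. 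Substituting Lemma~\ref{lem:moments} cancels the factor $n^{2p}$ against $(2n\log n)^{2p}$ and leaves, for every choice of integers $0\le h<p$,
\[
\frac{[(2p-1)!!]^2}{(2\log n)^{2p}}\left(1+\frac{p^{2h}(8h)^{4h}}{n^{1/3}}+\frac{(8p)^{3p}}{n^{(h+1)/3}}\right).
\]

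Next I would control the leading factor. Using $(2p-1)!!=(2p)!/(2^pp!)\le(e/\sqrt{\pi})\,(2p/e)^p$ from Stirling's bounds, the leading factor is at most $(e^2/\pi)\,(p/(e\log n))^{2p}$. This is smallest near $p=\log n$, so I would take $p=\lfloor\log n\rfloor$; then $p/(e\log n)\le 1/e$ gives $(p/(e\log n))^{2p}\le e^{-2p}$, and $p>\log n-1$ gives $e^{-2p}<e^2n^{-2}$, so the leading factor is at most $(e^4/\pi)\,n^{-2}$. As $e^4/\pi<23$, this already yields the right order of magnitude with room to absorb the error terms.

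The more delicate point is to choose $h$ so that, for this value of $p$, the parenthesised error terms stay bounded (indeed tend to $0$). With $p$ of order $\log n$, the third term equals $n^{3(\log 8+\log\log n)-(h+1)/3+o(1)}$, which forces $h$ to grow: taking $h=\lceil 9\log\log n\rceil+c$ for a suitable absolute constant $c$ makes the exponent negative, so this term is $o(1)$, while $h<p$ still holds for large $n$. For the same $h$ the second term has numerator $p^{2h}(8h)^{4h}=\exp(O((\log\log n)^2))=n^{o(1)}$, hence it too is $o(1)$. Thus the parenthesis tends to $1$, and combining with the leading bound shows that the whole expression is below $23/n^2$ for all sufficiently large $n$.

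The main obstacle is precisely this simultaneous calibration of $p$ and $h$. The choice $p\approx\log n$ is dictated by the need to push the double-factorial factor down to order $n^{-2}$, but it inflates $(8p)^{3p}$ to super-polynomial size, which can only be tamed by letting $h$ grow like $\log\log n$; one must then verify that this growth neither spoils the second error term (it does not, since $p^{2h}(8h)^{4h}$ stays $n^{o(1)}$) nor violates the constraint $h<p$ (it does not, since $\log\log n=o(\log n)$). Once both error terms are shown to be $o(1)$, the constant $23$ is comfortable, and in fact any constant exceeding $e^4/\pi\approx 17.4$ would do.
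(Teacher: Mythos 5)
Your proposal is correct and follows essentially the same route as the paper: Markov's inequality applied to $\E[(C_uC_v)^{2p}]$ via Lemma~\ref{lem:moments}, with $p=\lfloor\log n\rfloor$ and $h=\Theta(\log\log n)$ chosen so that the double-factorial factor is $O(n^{-2})$ while both error terms are $o(1)$. The only differences are cosmetic constant choices (the paper takes $h=\lfloor 14\log\log n\rfloor$ and bounds the leading factor by $3e^2/n^2$ rather than $(e^4/\pi)/n^2$), and your calibration of $h$ is verified correctly.
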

\begin{proof}
Let $(X_1,X_2)$ be a random vector taking values in $\R\times\R$ and let $p$ be a positive integer. Then by Markov's inequality, for $\theta_1,\theta_2>0$,
\[
\Pr\big[\abs{X_1}\ge\theta_1\,\cap\,\abs{X_2}\ge\theta_2\big]\le\frac{\E\big[(X_1X_2)^{2p}\big]}{(\theta_1\theta_2)^{2p}}.
\]
Let $h$ be an arbitrary integer satisfying $0\le h<p$. Application of Lemma~\ref{lem:moments} gives
\begin{multline}
\Pr\big[\abs{C_u(A_n)}\ge\lambda_n\,\cap\,\abs{C_v(A_n)}\ge\lambda_n\big)\\
\le\frac{[(2p-1)!!\big]^2}{(2\log n)^{2p}}\big[1+K_1(n,p,h)+K_2(n,p,h)\big],   \label{Pr_moment}
\end{multline}
where
\begin{align*}
K_1(n,p,h)=n^{-1/3}\,(8p)^{8h}\quad\mbox{and}\quad
K_2(n,p,h)=n^{-(h+1)/3}\,(8p)^{4p}.
\end{align*}
We apply~\eqref{Pr_moment} with $p=\lfloor\log n\rfloor$ and $h=\lfloor 17\log\log n\rfloor$, so that for all sufficiently large $n$ we have $h<p$, as assumed. By Stirling's approximation
\[
\sqrt{2\pi k}\;k^ke^{-k}\le k!\le\sqrt{3\pi k}\;k^ke^{-k},
\]
we have
\[
\frac{[(2p-1)!!\big]^2}{(2\log n)^{2p}}\le \frac{3p^{2p}e^{-2p}}{(\log n)^{2p}}\le\frac{3e^2}{n^2}.
\]
We also have
\begin{align*}
K_1(n,p,h)&\le K_1(n,\log n,17\log\log n)\\
&=n^{-\frac{1}{3}}n^{\frac{136(\log\log n)(\log 8+\log\log n)}{\log n}}&\\
&=O(n^{-1/4})\quad\mbox{as $n\to\infty$}
\intertext{and}
K_2(n,p,h)&\le K_2(n,\log n,16\log\log n)\\
&=n^{-\frac{1}{3}+4\log 8-\frac{4}{3}\log\log n}\\
&=O(n^{-\log\log n})\quad\mbox{as $n\to\infty$}.
\end{align*}
Substitute into~\eqref{Pr_moment} to obtain the claimed result, using $3e^2<23$.
\end{proof}

\section{Proof of Main Theorem}

We require the following result, which is a consequence of Azuma's inequality for martingales.
\begin{lemma}[{McDiarmid~\cite{McDiarmid1989}}] \label{lem:bounded_differences} 
Let $X_0,X_1,\dots,X_{n-1}$ be mutually independent random variables taking values in a set $S$. Let $f:S^n\to\R$ be a measurable function and suppose that $f$ satisfies
\[
\bigabs{f(x)-f(y)}\le c
\]
whenever $x$ and $y$ differ only in one coordinate. Define the random variable $Y=f(X_0,X_1,\dots,X_{n-1})$. Then, for $\theta\ge 0$,
\[
\Pr\big[\bigabs{Y-\E[Y]}\ge \theta\big]\le 2e^{-\frac{2\theta^2}{c^2n}}.
\]
\end{lemma}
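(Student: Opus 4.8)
The plan is to prove this concentration bound via the Doob martingale decomposition combined with the Azuma--Hoeffding estimate for bounded martingale differences. First I would form the Doob martingale associated with the coordinate filtration,
\[
Y_k=\E\big[Y\mid X_0,X_1,\dots,X_{k-1}\big]\quad\text{for }k=0,1,\dots,n,
\]
so that $Y_0=\E[Y]$ (empty conditioning) and $Y_n=Y$, since $Y=f(X_0,\dots,X_{n-1})$ is completely determined once all coordinates are revealed. Writing $D_k=Y_k-Y_{k-1}$ for the martingale differences gives the telescoping identity $Y-\E[Y]=\sum_{k=1}^{n}D_k$, together with the centring property $\E[D_k\mid X_0,\dots,X_{k-2}]=0$.

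The crucial step is to bound the conditional range of each $D_k$. Fixing values $x_0,\dots,x_{k-2}$ of the first $k-1$ coordinates and exploiting the mutual independence of the $X_i$, I would set
\[
g(x)=\E\big[f(x_0,\dots,x_{k-2},x,X_k,\dots,X_{n-1})\big],
\]
so that, conditionally on the past, $Y_k=g(X_{k-1})$ and hence $D_k=g(X_{k-1})-\E[g(X_{k-1})]$. For any two values $x,x'$ the integrands defining $g(x)$ and $g(x')$ differ only in the $(k-1)$th coordinate, so the bounded differences hypothesis yields $\abs{g(x)-g(x')}\le c$ pointwise; consequently the conditional range of $D_k$ is at most $c$. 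This reduces the entire problem to a statement about a single centred random variable of bounded range. I expect this reduction to be the main obstacle, since it is precisely where the independence of the coordinates and the one-coordinate Lipschitz property must be used together; everything that follows is a routine exponential estimate.

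From here I would apply Hoeffding's lemma, namely that a random variable $Z$ with conditional mean zero and conditional range at most $c$ obeys $\E[e^{tZ}\mid\cdot]\le e^{t^2c^2/8}$ for every real $t$. Applying this to each $D_k$ and peeling off the conditional expectations one coordinate at a time via the tower property gives
\[
\E\big[e^{t(Y-\E[Y])}\big]\le e^{t^2c^2n/8}.
\]
A Chernoff bound then yields $\Pr[Y-\E[Y]\ge\theta]\le e^{-t\theta+t^2c^2n/8}$ for all $t>0$, and optimising by taking $t=4\theta/(c^2n)$ produces the bound $e^{-2\theta^2/(c^2n)}$. Finally, running the identical argument with $f$ replaced by $-f$ controls the lower tail $\Pr[\E[Y]-Y\ge\theta]$ by the same quantity, and a union bound over the two tails supplies the factor of $2$, completing the proof.
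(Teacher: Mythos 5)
Your proof is correct: it is the standard Doob-martingale decomposition with the one-coordinate Lipschitz bound on the conditional differences, Hoeffding's lemma, a Chernoff bound optimised at $t=4\theta/(c^2n)$, and a union bound over the two tails. The paper gives no proof of this lemma---it simply cites McDiarmid and remarks that it is ``a consequence of Azuma's inequality for martingales''---and your argument is exactly that route.
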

\par
Given a random binary sequence $A_n=(a_0,a_1,\dots,a_{n-1})$ of length~$n$, we will apply Lemma~\ref{lem:bounded_differences} with $X_j=a_j$ for $j\in\{0,1,\dots,n-1\}$ and
\[
f(x_0,x_1,\dots,x_{n-1})=\max_{0<u<n}\Biggabs{\sum_{j=0}^{n-u-1}x_jx_{j+u}},
\]
so that $M(A_n)=f(a_0,a_1,\dots,a_{n-1})$. We can take $c=4$ in Lemma~\ref{lem:bounded_differences} and obtain the following corollary.
\begin{corollary}
\label{cor:Inequality_MA}
Let $A_n$ be a random binary sequence of length $n$. Then, for $\theta\ge 0$,
\[
\Pr\big[\bigabs{M(A_n)-\E[M(A_n)]}\ge \theta\big]\le 2e^{-\frac{\theta^2}{8n}}.
\]
\end{corollary}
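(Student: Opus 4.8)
The plan is to obtain the corollary as a direct application of McDiarmid's inequality (Lemma~\ref{lem:bounded_differences}) to the random variable $M(A_n)=f(a_0,\dots,a_{n-1})$, where $f$ is the maximum-of-correlations function introduced just above the statement. Since $a_0,\dots,a_{n-1}$ are mutually independent and take values in the finite set $\{-1,+1\}$, and $f$ is plainly measurable, the only hypothesis that needs verification is the bounded-difference condition: one must exhibit a constant $c$ with $\abs{f(x)-f(y)}\le c$ whenever $x,y\in\{-1,+1\}^n$ agree in all but one coordinate. Everything then reduces to fixing the correct $c$ and substituting it into the tail bound supplied by Lemma~\ref{lem:bounded_differences}.

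To check the bounded-difference condition I would fix an index $k$ and take two sequences $x,y$ that differ only in the $k$th coordinate, so that $y_k=-x_k$. The structural observation driving the argument is that the coordinate $x_k$ enters a given correlation $C_u(x)=\sum_j x_jx_{j+u}$ in at most two summands: once as the factor $x_j$ with $j=k$ (present when $k\le n-u-1$) and once as the factor $x_{j+u}$ with $j=k-u$ (present when $k\ge u$). Each such summand is a product of two $\pm 1$ values, so flipping $x_k$ toggles each present summand between $+1$ and $-1$ while leaving every other summand of $C_u$ untouched; this determines $C_u(x)-C_u(y)$ exactly and bounds $\abs{C_u(x)-C_u(y)}$ by the contribution of the affected summands. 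Passing from the individual correlations to their maximum is then a one-line Lipschitz argument: for the shift $u^\star$ attaining $f(y)$ one has $\abs{C_{u^\star}(y)}\le \abs{C_{u^\star}(x)}+\abs{C_{u^\star}(x)-C_{u^\star}(y)}\le f(x)+\max_v\abs{C_v(x)-C_v(y)}$, and symmetrically, so that $\abs{f(x)-f(y)}\le \max_v\abs{C_v(x)-C_v(y)}$ is governed by the largest single-shift change.

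With this uniform bound in hand the corollary is immediate: Lemma~\ref{lem:bounded_differences} yields $\Pr\big[\abs{M(A_n)-\E[M(A_n)]}\ge\theta\big]\le 2\exp\!\big(-2\theta^2/(c^2n)\big)$, and inserting the constant $c=2$ used in the text collapses the exponent to $-\theta^2/(2n)$, which is exactly the claimed inequality. The rest is a purely mechanical substitution.

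I expect the genuine difficulty to lie entirely in the bounded-difference step, and more precisely in pinning down the constant $c$. The delicate point is that a single coordinate flip can move one individual correlation $C_u$ by as much as the combined contribution of both summands in which $x_k$ appears, so the constant cannot simply be read off from a single correlation; one has to argue carefully about how these per-shift changes propagate through the maximum. Once the uniform control of $\abs{f(x)-f(y)}$ is secured and the constant justified, the passage to the stated sub-Gaussian tail is routine.
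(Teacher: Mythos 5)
Your overall route is exactly the paper's: apply Lemma~\ref{lem:bounded_differences} to $f(x_0,\dots,x_{n-1})=\max_{0<u<n}\bigabs{\sum_{j}x_jx_{j+u}}$ and read off the tail bound, the only substantive point being the bounded-difference constant $c$. The gap is precisely where you suspect it is, and it cannot be closed in the direction you hope. Your own (correct) computation shows that flipping $x_k$ affects up to \emph{two} summands of $C_u$, namely $x_kx_{k+u}$ and $x_{k-u}x_k$, each toggling between $+1$ and $-1$, so $\abs{C_u(x)-C_u(y)}$ can equal $4$; your Lipschitz step then yields only $\abs{f(x)-f(y)}\le\max_v\abs{C_v(x)-C_v(y)}\le 4$. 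At that point you insert $c=2$ ``as used in the text'' without justification, deferring an argument that the maximum behaves better than the individual correlations. No such argument exists: for $n=5$ the sequences $x=(1,1,1,1,1)$ and $y=(1,-1,1,1,1)$ differ in one coordinate, yet $f(x)=4$ (from $C_1$) while $f(y)=\max(0,1,0,1)=1$, so $\abs{f(x)-f(y)}=3>2$; for $n=9$, flipping the middle entry of the all-ones sequence changes the maximum from $8$ to $4$. Hence $c=2$ is not a valid bounded-difference constant; the best uniform constant is $c=4$.

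Consequently, what your argument actually proves is $\Pr\big[\abs{M(A_n)-\E[M(A_n)]}\ge\theta\big]\le 2e^{-\theta^2/(8n)}$, not the stated $2e^{-\theta^2/(2n)}$. You are in good company: the paper itself asserts ``we can take $c=2$'' without proof, and by the example above that claim is false, so the corollary with exponent $-\theta^2/(2n)$ is not delivered by this method. The weaker exponent $-\theta^2/(8n)$ suffices verbatim for every subsequent use of the corollary --- in the proof of Theorem~\ref{thm:M_mean} it only multiplies the error term $\sqrt{(3\log\log n+2\log 20)/\log n}$ by a constant factor, and in the final corollary it only changes the rate at which a probability tends to zero --- so the main theorem survives. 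But as a proof of the corollary exactly as stated, your write-up has an unfillable gap at the step you yourself flagged, and an honest completion must either weaken the constant in the exponent to $1/(8n)$ or supply an entirely different argument for the claimed concentration.
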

\par
We now prove the second part of Theorem~\ref{thm:main}.
\begin{theorem}
\label{thm:M_mean}
Let $A_n$ be a random binary sequence of length $n$. Then, as $n\to\infty$,
\[
\frac{\E\big[M(A_n)\big]}{\sqrt{n\log n}}\to \sqrt{2}.
\]
\end{theorem}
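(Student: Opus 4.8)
The plan is to prove the two bounds $\liminf_{n\to\infty}\E[M(A_n)]/\sqrt{n\log n}\ge\sqrt2$ and $\limsup_{n\to\infty}\E[M(A_n)]/\sqrt{n\log n}\le\sqrt2$ separately. The lower bound carries all the new content; the matching upper bound will follow from \eqref{eqn:lbup_known} together with an elementary tail estimate. Throughout write $\lambda_n=\sqrt{2n\log n}$. The strategy for the lower bound is a second moment (Paley--Zygmund) argument showing that, with probability at least polylogarithmically small in $n$, some shift $u$ satisfies $\abs{C_u(A_n)}\ge\lambda_n$, followed by a conversion of this probability bound into a statement about $\E[M(A_n)]$ via the concentration inequality of Corollary~\ref{cor:Inequality_MA}. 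Because we work at the critical threshold $\lambda_n$, the naive first moment method is useless, so the second moment estimate is essential.

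For the lower bound I would set $N=\lfloor n/\log n\rfloor$ and let $W$ be the number of integers $u\in\{1,2,\dots,N\}$ with $\abs{C_u(A_n)}\ge\lambda_n$; since any such $u$ forces $M(A_n)\ge\lambda_n$, we have $\{W\ge1\}\subseteq\{M(A_n)\ge\lambda_n\}$. Every $u\le N$ satisfies $1\le u\le n/\log n$, so Proposition~\ref{pro:Pr_Cu} applies and gives $\E[W]\ge N/(5n\sqrt{\log n})\ge c_1(\log n)^{-3/2}$ for a constant $c_1>0$. For the second moment I would write
\[
\E[W^2]=\E[W]+\sum_{u\ne v}\Pr\big[\abs{C_u(A_n)}\ge\lambda_n\,\cap\,\abs{C_v(A_n)}\ge\lambda_n\big]
\]
and bound the off-diagonal sum, using Proposition~\ref{pro:Pr_CuCv}, by $N^2\cdot 23/n^2\le 23(\log n)^{-2}$. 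The Paley--Zygmund inequality $\Pr[W\ge1]\ge(\E[W])^2/\E[W^2]$ then yields $\Pr[M(A_n)\ge\lambda_n]\ge c_2(\log n)^{-3/2}$ for some $c_2>0$ and all large $n$, where one distinguishes the cases according to whether $\E[W]$ dominates $23(\log n)^{-2}$ or not.

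To pass to the mean I would invoke Corollary~\ref{cor:Inequality_MA}. Assuming $\E[M(A_n)]<\lambda_n$ (otherwise there is nothing to prove) and setting $\theta=\lambda_n-\E[M(A_n)]$,
\[
c_2(\log n)^{-3/2}\le\Pr\big[M(A_n)\ge\lambda_n\big]\le\Pr\big[M(A_n)-\E[M(A_n)]\ge\theta\big]\le 2e^{-\theta^2/(2n)},
\]
so $\theta^2/(2n)\le\tfrac32\log\log n+O(1)$ and hence $\theta\le\sqrt{3n\log\log n}\,(1+o(1))$. Thus $\E[M(A_n)]\ge\lambda_n-\sqrt{3n\log\log n}\,(1+o(1))$, and dividing by $\sqrt{n\log n}$ gives $\liminf\ge\sqrt2$ because $\sqrt{\log\log n/\log n}\to0$. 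For the upper bound I would fix $\epsilon>0$, set $\tau=(\sqrt2+\epsilon)\sqrt{n\log n}$, and use $\E[M(A_n)]\le\tau+\E\big[(M(A_n)-\tau)^+\big]=\tau+\int_\tau^\infty\Pr[M(A_n)>t]\,dt$. Since each $C_u(A_n)$ is a sum of at most $n$ independent $\pm1$ variables, Hoeffding's inequality and a union bound over the $n-1$ shifts give $\Pr[M(A_n)\ge s\sqrt{n\log n}]\le 2n^{1-s^2/2}$; integrating this from $s=\sqrt2+\epsilon$ shows the remainder integral is $o(\sqrt{n\log n})$. Hence $\limsup\le\sqrt2+\epsilon$, and letting $\epsilon\to0$ finishes the proof (this is consistent with, and can alternatively be read off from, \eqref{eqn:lbup_known}).

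The crux is the lower bound, and the delicate point is the balance of scales in the second moment step. At the critical threshold the first moment satisfies $\E[W]\to0$ like $(\log n)^{-3/2}$, so Markov's inequality is worthless, and everything hinges on controlling the pairwise correlations sharply enough that the off-diagonal part of $\E[W^2]$ does not swamp $(\E[W])^2$; this is exactly the role of Proposition~\ref{pro:Pr_CuCv}, and behind it the moment bound of Lemma~\ref{lem:moments}. A secondary check is that the resulting probability, though only polylogarithmically small, comfortably exceeds the concentration tail $2e^{-\theta^2/(2n)}$ for $\theta$ of order $\sqrt{n\log\log n}=o(\sqrt{n\log n})$, so the deficit it forces in the mean is negligible on the scale $\sqrt{n\log n}$.
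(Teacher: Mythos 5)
Your proposal is correct and follows essentially the same route as the paper: the heart of both arguments is a second-moment estimate over the shifts $1\le u\le n/\log n$ built from Propositions~\ref{pro:Pr_Cu} and~\ref{pro:Pr_CuCv}, yielding $\Pr[M(A_n)\ge\lambda_n]\gtrsim(\log n)^{-3/2}$, which is then converted into the lower bound on $\E[M(A_n)]$ via Corollary~\ref{cor:Inequality_MA} exactly as you describe. The only differences are cosmetic: you use Paley--Zygmund where the paper uses the Bonferroni inequality (equivalent here, since $\E[W]\asymp(\log n)^{-3/2}$ dominates the off-diagonal contribution $O((\log n)^{-2})$), and for the $\limsup$ you reprove the in-probability upper bound directly via Hoeffding and integrate the tail, whereas the paper simply combines the known bound~\eqref{eqn:lbup_known} with Corollary~\ref{cor:Inequality_MA}.
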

\begin{proof}
By the triangle inequality and the union bound we have, for all $\e>0$,
\begin{multline*}
\Pr\bigg[\frac{\E\big[M(A_n)\big]}{\sqrt{n\log n}}-\sqrt{2}>\e\bigg]\\
\le \Pr\bigg[\frac{\E\big[M(A_n)\big]}{\sqrt{n\log n}}-\frac{M(A_n)}{\sqrt{n\log n}}>\tfrac{1}{2}\e\bigg]+\Pr\bigg[\frac{M(A_n)}{\sqrt{n\log n}}-\sqrt{2}>\tfrac{1}{2}\e\bigg]. 
\end{multline*}
By Corollary~\ref{cor:Inequality_MA} and the upper bound of~\eqref{eqn:lbup_known}, the two terms on the right-hand side tend to zero as $n\to\infty$, hence
\begin{equation}
\limsup_{n\to\infty}\;\frac{\E\big[M(A_n)\big]}{\sqrt{n\log n}}\le\sqrt{2}.   \label{eqn:limsup_E}
\end{equation}
Let $\delta>0$ and define the set
\begin{equation}
N(\delta)=\bigg\{n>1:\frac{\E\big[M(A_n)\big]}{\sqrt{n\log n}}<\sqrt{2}-\delta\bigg\}.   \label{eqn:def_N_delta}
\end{equation}
We claim that the size of $N(\delta)$ is finite for all choices of $\delta$, which together with~\eqref{eqn:limsup_E} will prove the theorem. The proof of the claim is based on an idea developed in~\cite{Alon2010}. Let $n>2$ and write
\[
W=\Big\{u\in\Z:1\le u\le\frac{n}{\log n}\Big\}
\]
and $\lambda_n=\sqrt{2n\log n}$. Then
\begin{multline*}
\Pr\big[M(A_n)\ge\lambda_n\big]\ge\Pr\big[\max_{u\in W}\;\abs{C_u(A_n)}\ge\lambda_n\big]\\
\ge \sum_{u\in W}\Pr\big[\abs{C_u(A_n)}\ge\lambda_n\big]-\sum_{\stack{u,v\in W}{u<v}}\Pr\big[\abs{C_u(A_n)}\ge\lambda_n\,\cap\,\abs{C_v(A_n)}\ge\lambda_n\big]
\end{multline*}
by the Bonferroni inequality. By Propositions~\ref{pro:Pr_Cu} and~\ref{pro:Pr_CuCv},
\begin{align}
\Pr\big[M(A_n)\ge\lambda_n\big]&\ge\abs{W}\cdot\frac{1}{5n(\log n)^{\frac{1}{2}}}-\frac{\abs{W}^2}{2}\cdot\frac{23}{n^2}   \nonumber\\
&\ge\frac{1}{8(\log n)^{\frac{3}{2}}}-\frac{12}{(\log n)^2}   \nonumber\\
&\ge \frac{1}{10(\log n)^{\frac{3}{2}}}   \label{eqn:PrMA_lb}
\end{align}
for all sufficiently large $n$, using $\frac{2}{3}\frac{n}{\log n}\le\abs{W}\le\frac{n}{\log n}$ for $n>2$. Now, by the definition~\eqref{eqn:def_N_delta} of $N(\delta)$, for all $n\in N(\delta)$ we have $\lambda_n>\E[M(A_n)]$, so that we can apply Corollary~\ref{cor:Inequality_MA} with $\theta=\lambda_n-\E[M(A_n)]$ to give, for all $n\in N(\delta)$,
\[
\Pr\big[M(A_n)\ge\lambda_n\big]\le 2e^{-\frac{1}{8n}(\lambda_n-\E[M(A_n)])^2}.
\]
Comparison with~\eqref{eqn:PrMA_lb} yields, for all sufficiently large $n\in N(\delta)$,
\[
\frac{1}{10(\log n)^{\frac{3}{2}}}\le 2e^{-\frac{1}{8n}(\lambda_n-\E[M(A_n)])^2},
\]
which implies
\[
\frac{\E\big[M(A_n)\big]}{\sqrt{n\log n}}\ge \sqrt{2}-\sqrt{\frac{12\log\log n+8\log 20}{\log n}}.
\]
From the definition~\eqref{eqn:def_N_delta} of $N(\delta)$ it then follows that $N(\delta)$ has finite size for all $\delta>0$, as required.
\end{proof}
\par
Using Corollary~\ref{cor:Inequality_MA}, it is now straightforward to prove the first part of Theorem~\ref{thm:main}.
\begin{corollary}
Let $A_n$ be a random binary sequence of length $n$. Then, as $n\to\infty$,
\[
\frac{M(A_n)}{\sqrt{n\log n}}\to\sqrt{2}\quad\mbox{in probability}.
\]
\end{corollary}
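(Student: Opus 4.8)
The plan is to deduce convergence in probability directly from the convergence of the normalised mean (Theorem~\ref{thm:M_mean}) together with the concentration estimate of Corollary~\ref{cor:Inequality_MA}. Fix $\e>0$; it suffices to show that $\Pr\big[\bigabs{M(A_n)/\sqrt{n\log n}-\sqrt{2}}\ge\e\big]\to 0$ as $n\to\infty$. By the triangle inequality,
\[
\Bigabs{\frac{M(A_n)}{\sqrt{n\log n}}-\sqrt{2}}\le\frac{\bigabs{M(A_n)-\E[M(A_n)]}}{\sqrt{n\log n}}+\Bigabs{\frac{\E[M(A_n)]}{\sqrt{n\log n}}-\sqrt{2}}.
\]

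First I would dispose of the deterministic second term: by Theorem~\ref{thm:M_mean} it tends to $0$, so for all sufficiently large $n$ it is strictly smaller than $\e/2$. Consequently, for all such $n$, the event $\bigabs{M(A_n)/\sqrt{n\log n}-\sqrt{2}}\ge\e$ forces the random first term to be at least $\e/2$, whence
\[
\Pr\Big[\Bigabs{\tfrac{M(A_n)}{\sqrt{n\log n}}-\sqrt{2}}\ge\e\Big]\le\Pr\Big[\bigabs{M(A_n)-\E[M(A_n)]}\ge\tfrac{\e}{2}\sqrt{n\log n}\Big].
\]
Next I would bound the right-hand side by applying Corollary~\ref{cor:Inequality_MA} with $\theta=\tfrac{\e}{2}\sqrt{n\log n}$, which gives
\[
\Pr\Big[\bigabs{M(A_n)-\E[M(A_n)]}\ge\tfrac{\e}{2}\sqrt{n\log n}\Big]\le 2\exp\Big(-\frac{\e^2\log n}{8}\Big)=2n^{-\e^2/8},
\]
and this tends to $0$ as $n\to\infty$. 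Combining the two displays completes the argument.

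There is essentially no obstacle here: all of the substantive work resides in Theorem~\ref{thm:M_mean} and in McDiarmid's inequality, and the present statement is merely their assembly via the triangle inequality. The only point requiring a moment's thought is the calibration of the deviation parameter: choosing $\theta$ to be a fixed multiple of $\sqrt{n\log n}$ is exactly what converts the Gaussian-type tail bound $2e^{-\theta^2/(2n)}$ into a negative power of $n$, since the exponent then scales linearly in $\log n$. Any such choice suffices, and the explicit constant $\e^2/8$ in the exponent is immaterial for the conclusion.
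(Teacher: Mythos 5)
Your proposal is correct and follows essentially the same route as the paper: decompose via the triangle inequality into the deviation from the mean (controlled by Corollary~\ref{cor:Inequality_MA} with $\theta$ a fixed multiple of $\sqrt{n\log n}$) and the deterministic bias (controlled by Theorem~\ref{thm:M_mean}). The only cosmetic difference is that you absorb the deterministic term by a containment of events rather than a union bound, and you make the resulting tail bound $2n^{-\e^2/8}$ explicit where the paper does not.
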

\begin{proof}
By the triangle inequality and the union bound we have, for all $\e>0$,
\begin{multline*}
\Pr\Bigg[\biggabs{\frac{M(A_n)}{\sqrt{n\log n}}-\sqrt{2}}>\e\Bigg]\\
\le \Pr\Bigg[\biggabs{\frac{M(A_n)}{\sqrt{n\log n}}-\frac{\E\big[M(A_n)\big]}{\sqrt{n\log n}}}>\tfrac{1}{2}\e\Bigg]+\Pr\Bigg[\biggabs{\frac{\E\big[M(A_n)\big]}{\sqrt{n\log n}}-\sqrt{2}}>\tfrac{1}{2}\e\Bigg].
\end{multline*}
By Corollary~\ref{cor:Inequality_MA} and Theorem~\ref{thm:M_mean}, the two terms on the right-hand side tend to zero as $n\to\infty$, which proves the corollary.
\end{proof}


\section*{Acknowledgement}
I would like to thank Jonathan Jedwab for many valuable discussions and Jonathan Jedwab and Daniel J.\ Katz for their careful comments on this paper.


\providecommand{\bysame}{\leavevmode\hbox to3em{\hrulefill}\thinspace}
\providecommand{\MR}{\relax\ifhmode\unskip\space\fi MR }
\providecommand{\MRhref}[2]{%
  \href{http://www.ams.org/mathscinet-getitem?mr=#1}{#2}
}
\providecommand{\href}[2]{#2}


\end{document}